\newtheorem{theorem}{Theorem}[section]
\newtheorem{cor}[theorem]{Corollary}
\newtheorem{df}[theorem]{Definition}
\title {Cobordism classes of maps and covers for spheres}
\author {Oleg R. Musin\thanks{The first author is partially supported by the NSF grant DMS-1400876 and the RFBR grant 15-01-99563.}\,  and Jie Wu\footnote{The second author is partially supported by the Singapore Ministry of Education research grant (AcRF Tier 1 WBS No. R-146-000-222-112) and a grant (No. 11329101) of NSFC of China.}}
\begin{document}

	\ifpdf \DeclareGraphicsExtensions{.pdf, .jpg, .tif, .mps} \else
	\DeclareGraphicsExtensions{.eps, .jpg, .mps} \fi	
	
\date{}
\maketitle

\begin{abstract}
In this paper we show that for $m>n$ the set of cobordism classes of maps from $m$--sphere to $n$--sphere is trivial. The determination of the cobordism homotopy groups of spheres  admits applications to the covers for spheres.
\end{abstract}

\medskip

\noindent {\bf Keywords:}  cobordism, homotopy group, covers

\section{Introduction}

 Let $M_1$ and $M_2$ be  compact oriented manifolds of dimension $m$. Two continuous maps  $f_1:M_1\to X$  and $f_2:M_2\to X$ are called cobordant if there are a compact oriented manifold $W$ with  $\partial W=M_1 \sqcup M_2$ and  a continuous map $F:W\to X$  such that  $F|_{M_i}=f_i$ for $i=1,2$. Note that the set of cobordism classes $f:S^m\to X$ form a group $\pi^C_m(X)$ that is a quotient of $\pi_m(X)$.

In Section 2 we consider assumptions for  $X$ such that  $\pi^C_m(X)$=0  (Theorem \ref{TC}). In particular,    Corollary~\ref{thm28} states that $\pi^C_n(S^n)=\pi_n(S^n)={\mathbb Z}$ and if $m>n$ then
$$\pi^C_m(S^n)=0.$$


In Section 3 we show that for manifolds the homotopy and cobordism classes of covers are  equivalent to the homotopy and cobordism classes of their associated maps. Then we can apply results of Sections 2 for covers, in particular, see  Corollary \ref{cor36}.

\section{Cobordism classes of maps for spheres}

Consider a group of oriented cobordism classes of maps $\Omega_*^{SO}(X)$ \cite[Chapter 1]{CF}.
 Let $M_i$, $i=1,2$, be compact oriented manifolds  without boundary of dimension $m$.
Let $f_i:M_i\to X$, $i=1,2$, be  continuous maps to a space $X$.  Then $[f_1]_C=[f_2]_C$ in  $\Omega_m^{SO}(X)$ , i.e. maps $f_i$ are {\em cobordant}  if there are a compact oriented manifold $W$ with  $\partial W=M_1 \sqcup M_2$ and  a continuous map $F:W\to X$  such that  $F|_{M_i}=f_i$ for $i=1,2$.

If $M_2=\emptyset$, then  $[f_1]_C=0$. In this case  $f_1$ is called {\em null--cobordant}.

Let $M$ be a compact oriented manifold  without boundary.  We denote the set of cobordism classes of $f:M\to X$  by $[M,X]_C$.

\begin{theorem} \label{TC}
Let $X$ be a finite $CW$-complex whose integral homology $H_*(X,\mathbb{Z})$ has only 2-torsion. Let $f:S^m\to X$ be a map that  induces zero homomorphism of $m$-dimensional cohomology with coefficients in $\mathbb Z$ and $\mathbb Z_2$. Then $f$ is null-cobordant. in $\Omega_m^{SO}(X)$ the image of $f$ is 0. In particular,  $[S^m,X]_C=0$ if $\dim X<m$.
\end{theorem}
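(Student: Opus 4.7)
The plan is to show that $[f] \in \Omega_m^{SO}(X)$ vanishes by detecting the class through characteristic numbers and verifying that each one is zero under the stated hypotheses. I will use Wall's splitting of the Thom spectrum $MSO$ as a wedge of Eilenberg--MacLane spectra of type $H\mathbb{Z}$ and $H\mathbb{Z}_2$, which produces an identification
\[
\Omega_m^{SO}(X) \;\cong\; \bigoplus_i H_{m-n_i}(X;\mathbb{Z}) \;\oplus\; \bigoplus_j H_{m-k_j}(X;\mathbb{Z}_2),
\]
in which the component of a bordism class $[M \xrightarrow{g} X]$ in each summand is a characteristic number of the form $g_*(P_I(TM)\cap[M])$ (for Pontryagin-type summands, $\mathbb{Z}$-valued) or $g_*(w_J(TM)\cap[M]_{\mathbb{Z}_2})$ (for Stiefel--Whitney-type summands, $\mathbb{Z}_2$-valued). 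The hypothesis that $H_*(X;\mathbb{Z})$ has only 2-torsion ensures that the detection requires only $\mathbb{Z}$- and $\mathbb{Z}_2$-valued characteristic numbers, with no auxiliary $\mathbb{Z}/2^k$ contributions.

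First I would specialize to $M = S^m$. Since $TS^m$ is stably parallelizable, $P_I(TS^m) = 0$ for $|I|\geq 1$ and $w_J(TS^m) = 0$ for $|J|\geq 1$; consequently every component of $[f]$ in a summand with $n_i > 0$ or $k_j > 0$ is automatically zero. The only potentially nonzero contributions come from the ``orientation'' summands, namely $f_*[S^m] \in H_m(X;\mathbb{Z})$ (from the $n_i = 0$ summand) and, if one is present, $f_*[S^m]_{\mathbb{Z}_2} \in H_m(X;\mathbb{Z}_2)$.

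Next I would kill these two remaining components using the cohomology hypotheses. For the mod-$2$ one, the Kronecker pairing gives $\langle \beta, f_*[S^m]_{\mathbb{Z}_2}\rangle = \langle f^*\beta, [S^m]_{\mathbb{Z}_2}\rangle = 0$ for all $\beta \in H^m(X;\mathbb{Z}_2)$, forcing $f_*[S^m]_{\mathbb{Z}_2} = 0$. The analogous argument with integer coefficients places $c := f_*[S^m]$ inside the torsion subgroup $T_m \subset H_m(X;\mathbb{Z})$ (via universal coefficients). Since $T_m$ is a $\mathbb{Z}_2$-vector space by the 2-torsion assumption, $T_m$ injects into $H_m(X;\mathbb{Z})\otimes\mathbb{Z}_2$, and the image of $c$ there coincides with the mod-$2$ reduction of $f_*[S^m]$, which has just been shown to vanish; hence $c = 0$. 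Together with the earlier vanishing, this gives $[f] = 0$ and $f$ is null-cobordant. The final assertion $[S^m,X]_C = 0$ when $\dim X < m$ is then immediate, since both $H^m(X;\mathbb{Z})$ and $H^m(X;\mathbb{Z}_2)$ vanish in that range and the hypotheses are automatically satisfied.

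The hard part will be invoking the splitting of $MSO$ in the correct twisted form and confirming that, under the 2-torsion hypothesis on $H_*(X)$, the detection of $\Omega_m^{SO}(X)$ really does reduce to $\mathbb{Z}$-Pontryagin and $\mathbb{Z}_2$-Stiefel--Whitney numbers rather than needing extra mod-$2^k$ information. Granted that, the rest is a direct consequence of the stable parallelizability of $S^m$ together with the stated hypotheses.
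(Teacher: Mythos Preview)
Your argument is correct and is at heart the same as the paper's: both rest on the fact that, for $X$ with only $2$-torsion in integral homology, a class in $\Omega^{SO}_m(X)$ is detected by its Pontrjagin and Stiefel--Whitney numbers. The paper simply invokes this as Conner--Floyd's Theorem~17.6 and observes in one line that the hypotheses on $f^*$ force those numbers to agree with the constant map's. You instead unpack the mechanism behind Conner--Floyd's theorem via Wall's $2$-local splitting of $MSO$ into Eilenberg--MacLane spectra, use the stable triviality of $TS^m$ to kill all components except the orientation classes, and then eliminate those with the $f^*=0$ hypotheses and a short torsion argument. Your route is more self-contained and makes the role of the $2$-torsion hypothesis explicit (it is exactly what lets you argue that $T_m$ injects into mod-$2$ homology, and it is also what ensures $\Omega^{SO}_*(X)$ has no odd torsion so that the $2$-local splitting suffices); the paper's route is shorter because it outsources all of this to the cited black box. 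The mathematical content is identical.
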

\begin{proof}
By~\cite[Theorem 17.6]{CF}, the cobordism class of $f\colon S^m\to X$ is determined by the Pontrjagin numbers and the Stiefel-Whitney numbers of the map $f$. From the definition, the Pontrjagin numbers and the Stiefel-Whitney numbers of the map $f$ are determined by its induced homomorphisms on cohomology with coefficients in $\mathbb Z$ and $\mathbb Z_2$, respectively.   The hypothesis in the statement guarantees that $f$ and the constant map induce the same homomorphism on cohomology with coefficients in $\mathbb Z$ and $\mathbb Z_2$, and hence the result.
\end{proof}


\medskip

Let $M$ be an $m$--dimensional sphere ${S}^m$.  In this case denote $[M,S^n]_C$ by $\pi_m^C(S^n)$. It is easy to prove that the cobordism classes  $\pi_m^C(S^n)$ form a group. Moreover, there is a subgroup $N$ in  $\pi_m(S^n)$ such that
$$ \pi_m^C(S^n)=\pi_m(S^n)/N.$$

\begin{cor} \label{thm28}  If $m\ne n$, then $\pi_m^C(S^n)=0$,  otherwise $\pi_n^C(S^m)={\mathbb Z}$.
\end{cor}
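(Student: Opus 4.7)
The plan is to split into three cases according to whether $m<n$, $m>n$, or $m=n$, with the first two being essentially immediate and the third requiring a small direct argument about degree.

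For $m<n$, I would simply note that $\pi_m(S^n)=0$ by the $(n-1)$-connectedness of $S^n$, and since $\pi_m^C(S^n)$ is by construction a quotient of $\pi_m(S^n)$, it must also vanish. For $m>n$, I would invoke Theorem~\ref{TC} with $X=S^n$: the integral homology of $S^n$ is free (so in particular has only $2$-torsion vacuously), $H^m(S^n;\mathbb{Z})=0$ and $H^m(S^n;\mathbb{Z}_2)=0$ because $m>n=\dim S^n$, so the hypotheses on the induced cohomology homomorphisms are automatic. Since $\dim S^n=n<m$, Theorem~\ref{TC} gives $[S^m,S^n]_C=0$.

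The only case requiring a small independent argument is $m=n$. Here $\pi_n^C(S^n)$ is a quotient of $\pi_n(S^n)=\mathbb{Z}$, so it suffices to exhibit a surjection $\pi_n^C(S^n)\to\mathbb{Z}$. I would use the Brouwer degree. The key claim is that the degree is a cobordism invariant of maps $S^n\to S^n$: given a cobordism $F\colon W^{n+1}\to S^n$ with $\partial W=S^n\sqcup S^n$ realizing $f_1\sim f_2$, choose a regular value $p\in S^n$ common to $F$, $f_1$, $f_2$ (perturb if necessary, or use that a smooth representative exists in each homotopy class). Then $F^{-1}(p)$ is a compact oriented $1$-manifold with boundary $f_1^{-1}(p)\sqcup f_2^{-1}(p)$, and a compact oriented $1$-manifold has zero signed boundary count, which forces $\deg(f_1)=\deg(f_2)$. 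Hence degree descends to a well-defined homomorphism $\pi_n^C(S^n)\to\mathbb{Z}$ factoring the standard isomorphism $\pi_n(S^n)\cong\mathbb{Z}$, so both the surjection $\pi_n(S^n)\to\pi_n^C(S^n)$ and the degree map are isomorphisms and $\pi_n^C(S^n)=\mathbb{Z}$.

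The main obstacle is really only the bookkeeping in the $m=n$ case, namely verifying that cobordism respects degree; everything else is an immediate consequence of Theorem~\ref{TC} and elementary homotopy theory of spheres. I would not expect any difficulty beyond choosing smooth representatives and a regular value so that the classical boundary argument for a compact oriented $1$-manifold applies.
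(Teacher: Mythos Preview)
Your proposal is correct and follows essentially the same approach as the paper: the three-case split, the appeal to Theorem~\ref{TC} for $m>n$, and the regular-value/1-manifold argument showing that degree is a cobordism invariant for $m=n$ all match the paper's proof. Your write-up is somewhat more explicit about verifying the hypotheses of Theorem~\ref{TC} and about the factorization through the quotient, but the underlying ideas are identical.
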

\begin{proof} We obviously have the case $m<n$. Theorem \ref{TC} yields the most complicated case. 

Let $m=n$. The Hopf degree theorem (see \cite[Sect. 7]{Milnor69}) states that two continuous  maps $f_1, f_2 : S^n\to S^n$ are homotopic, i.e. $[f_1]=[f_2]$ in $\pi_n(S^n)$, if and only if $\deg{f_1} = \deg{f_2}$.
It is clear that $[f_1]=[f_2]$ implies $[f_1]_C=[f_2]_C$. Now we show that from $[f_1]_C=[f_2]_C$ follows $\deg{f_1} = \deg{f_2}$. Indeed,  then we have $F:W \to S^n$ with $F|_{M_i=S^n}=f_i$.
Note that $Z:=F^{-1}(x)$ for a regular $x\in S^n$,  is a manifold of dimension one. It is easy to see that a cobordism $(Z,Z_1,Z_2)$, where $Z_i:=f_i^{-1}(x)$,  implies $\deg{f_1}=\deg{f_2}$. Thus, $\pi_n^C(S^n)=\pi_n(S^n)={\mathbb Z}$.
\end{proof}

Corollary \ref{thm28} states that $f:S^m\to S^n$ is null-cobordant for $m>n$. Therefore, we have the following result.

\begin{cor}\label{cor36} Let $m>n$. Then for any continuous map  $f:S^m\to S^n$ there are a compact oriented manifold $W$ with  $\partial W=S^m$  and  a continuous map $F:W\to S^n$   such that $F$ on the boundary coincides with $f$.
\end{cor}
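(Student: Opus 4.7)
The plan is to obtain this as an immediate consequence of Corollary~\ref{thm28}, by unpacking the definition of null-cobordism given at the start of Section~2. Since $m>n$, Corollary~\ref{thm28} tells us that $\pi_m^C(S^n)=0$, so every continuous map $f:S^m\to S^n$ represents the zero class; equivalently, $[f]_C=0$ in $\Omega_m^{SO}(S^n)$, which by definition means $f$ is null-cobordant.

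First I would recall the precise definition: a map $f_1:M_1\to X$ is null-cobordant when there exist a map $f_2:M_2\to X$ with $M_2=\emptyset$ and a cobordism between them, i.e., a compact oriented manifold $W$ with $\partial W=M_1\sqcup M_2=M_1$ together with a continuous extension $F:W\to X$ of $f_1$. Then I would specialize to $M_1=S^m$ and $X=S^n$: the null-cobordism of $f$ provided by Corollary~\ref{thm28} gives precisely a compact oriented manifold $W$ with $\partial W=S^m$ and a continuous map $F:W\to S^n$ satisfying $F|_{\partial W}=f$, which is the content of the statement.

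There is no genuine obstacle here; the only subtlety would be making sure the orientation conventions used in the cobordism definition match the statement (in particular, that the empty manifold counts as a legitimate second boundary component), but this is built into the setup of $\Omega_*^{SO}(X)$ as used in Section~2. Thus the corollary is essentially a direct translation of Corollary~\ref{thm28} from the language of cobordism classes to the language of explicit extensions across a bounding manifold.
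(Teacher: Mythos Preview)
Your proposal is correct and follows exactly the paper's approach: the paper simply notes that Corollary~\ref{thm28} gives null-cobordance of $f$ for $m>n$, and the statement is then just the definition of null-cobordant unpacked. Your write-up is more explicit about the unpacking, but the argument is the same.
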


\noindent\textbf{Remark.} In the earlier version of this paper, we had a proof that $\pi_m^C(S^n)=0$, where $m>n$ only for particular cases. We formulated this statement as a conjecture and sent the preprint to several topologists. Soon, Diarmuid Crowley sent us a sketch of the proof of this conjecture. Later, Alexey Volovikov pointed out to us that  Theorem \ref{TC} follows easily from \cite[Theorem 17.6]{CF}.

\section{Homotopy and cobordism classes of covers}

For open (or closed) covers $\mathcal U$ of a normal space $T$ we considered certain homotopy classes $[f_\mathcal U]$ in $[T,S^n]$ defined in \cite{MusH}.
In this section we define a homotopy equivalence for covers and prove  that two covers $\mathcal U_1$ and $\mathcal U_2$ are homotopy equivalent if and only if $[f_{\mathcal U_1}]=[f_{\mathcal U_2}]$ in $[T,{S}^n]$ (Theorem \ref{th32}).  We also prove that two covers on manifolds of the same dimension are  cobordant if and only if the corresponding cobordism classes $[f_{\mathcal U_i}]_C$ in $\Omega_*(S^n)$ are equal (Theorem \ref{thm33}).

 The homotopy invariants $[f_\mathcal U]$ can be considered as obstructions for extending covers of a subspace $A \subset X$ to a cover of all of $X$.  (Note that the classical obstruction theory (see \cite{Hu,Span}) considers homotopy invariants that equal zero if a map can be extended from the $k$--skeleton of $X$ to the $(k+1)$--skeleton and are non-zero otherwise.)
In our papers \cite{MusH,Mus17} using these obstructions we obtain generalizations of the classic KKM (Knaster--Kuratowski--Mazurkiewicz) and Sperner lemmas \cite{KKM,Sperner}.

Let $X$ be any compact oriented manifold of dimension $(m+1)$ and $A=\partial X$ be its boundary.
Let  $\mathcal U=\{U_0,\ldots,U_{n+1}\}$ be a cover of
$A$ such that the intersection of all subsets $U_i$ is empty. Then $[\mathcal U]\in[A,S^n]$, where the homotopy class $[\mathcal{U}]$ is defined in \cite{MusH}.  In the case $m=n$ we have $[A,S^n]=\mathbb Z$ and, if $[\mathcal U]\ne0$, then for any extension of this cover to a cover $\mathcal V=\{V_0,\ldots,V_{n+1}\}$  of $X$ the intersection
$$
\bigcap\limits_{i=0}^{n+1}{V_i}\ne\emptyset \eqno (4.1)
$$
This fact is a generalization of the Sperner--KKM lemma \cite[Theorem 2.6]{MusH}.

Another generalization of the KKM lemma is the following (see \cite[Corollary 3.1]{MusH}):  Let $X$ is an $(m+1)$--disc and $A=S^m$. If $[\mathcal U]\ne0$ in $\pi_m(S^n)$, then we have property $(4.1)$.

However, for $m>n$ not all pairs $(X,A)$ satisfy property $(4.1)$. For instance, $X=\mathbb{C}\mathrm{P}^2\smallsetminus\mathrm{Int}(D^4)$ and $A:=\partial{X}=S^3$. Then the Hopf map $f:S^3\to S^2$ can be extended to a continuous map $F:X\to S^2$. It implies that a coresponding cover  $\mathcal U=\{U_0,U_1,U_2,U_3\}$ can be extened to $X$ such that the intersection of all $U_i$ is empty.

\medskip

Let $\mathcal U=\{U_0,\ldots,U_{n+1}\}$ be  a collection of open sets whose union contains a normal space $T$. In other words, $\mathcal U$ is a cover of $T$.  Let  $\Phi=\{\varphi_0,\ldots,\varphi_{n+1}\}$ be a partition of unity subordinate to $\mathcal U$.   Let
$$
f_{\mathcal U,\Phi}(x):=\sum\limits_{i=0}^{n+1}{\varphi_i(x)v_i},
$$
where $v_0,\ldots,v_{n+1}$ are vertices of  an $(n+1)$--simplex $\Delta^{n+1}$ in ${\mathbb R}^{n+1}$.

Suppose the intersection of all $U_i$ is empty.
Then $f_{\mathcal U,\Phi}$ is a continuous map from $T$ to ${S}^{n}$. In \cite[Lemmas 2.1 and 2.2]{MusH} we proved that a homotopy class $[f_{\mathcal U,\Phi}]$ in $[T,{S}^{n}]$ does not depend on $\Phi$. We denote it by $[f_\mathcal U]$.

 In fact, see  \cite[Lemma 2.4]{MusH}, the homotopy classes $[f_\mathcal U]$ of covers are also well defined for closed sets. We call a family of sets $\mathcal S=\{S_0,\ldots,S_{n+1}\}$ a {\em cover} of a space $T$ if $\mathcal S$ is either an open or closed cover of $T$.

Homotopy invariants of covers we defined through homotopy invariants of maps. Let us define them directly for covers.

\begin{df} \label{def31} Let $\mathcal S_i=\{S_0^i,\ldots,S_{n+1}^i\}$, $i=1,2$,   be  covers of a normal space $T$ such that for $i=1,2$ the intersection of all subsets in $\mathcal S_i$ is empty.    We say that $\mathcal S_1$ is homotopic to $\mathcal S_2$ and write $[\mathcal S_1]=[\mathcal S_2]$ if \, $T\times[0,1]$ can be covered  by $\mathcal Q=\{Q_0,\ldots,Q_{n+1}\}$ such that $\mathcal Q$ is an extension of $\mathcal S_1 \cup \mathcal S_2$ of $T\times\{0,1\}$ and the intersection of all $Q_k$ is empty.
\end{df}

\medskip

 The following theorem extends Theorem 2.2 in \cite{MusH}.
\begin{theorem}
\label{th32}
Let $\mathcal S_i=\{S_0^i,\ldots,S^i_{n+1}\}$, $i=1,2$, be  covers of  a normal space $T$.
Suppose 
the intersection of all the $S_j^i$ in $\mathcal S_i$ is empty.
Then $[\mathcal S_1]=[\mathcal S_2]$ if and only if $[f_{\mathcal S_1}]=[f_{\mathcal S_2}]$ in $[T,{S}^n]$.
\end{theorem}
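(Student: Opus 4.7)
The plan is to prove the two implications separately, each by an explicit construction that inverts the definition of $f_{\mathcal U}$ up to the usual partition-of-unity slack.

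\textbf{Forward direction.} From a cover-homotopy $\mathcal Q = \{Q_0, \ldots, Q_{n+1}\}$ on $T \times [0,1]$ as in Definition \ref{def31}, I would extract a map-homotopy by choosing a partition of unity $\Psi$ on $T \times [0,1]$ subordinate to $\mathcal Q$ (available because $T \times [0,1]$ is normal) and forming $F := f_{\mathcal Q, \Psi} : T \times [0,1] \to S^n$. Since $\mathcal Q$ restricts to $\mathcal S_1$ at $t=0$ and $\mathcal S_2$ at $t=1$, the restricted partitions $\Psi|_{t=0}$ and $\Psi|_{t=1}$ are subordinate to $\mathcal S_1$ and $\mathcal S_2$, so $F(\cdot,0) = f_{\mathcal S_1, \Psi|_0}$ and $F(\cdot,1) = f_{\mathcal S_2, \Psi|_1}$. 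Invariance of the homotopy class under the choice of subordinate partition of unity (Lemmas 2.1 and 2.2 of \cite{MusH}) then gives $[F(\cdot,0)] = [f_{\mathcal S_1}]$ and $[F(\cdot,1)] = [f_{\mathcal S_2}]$, so $F$ certifies their equality in $[T, S^n]$.

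\textbf{Reverse direction.} Here I would build a cover-homotopy from a map-homotopy $H : T \times [0,1] \to S^n$ with $H_0 = f_{\mathcal S_1, \Phi^1}$ and $H_1 = f_{\mathcal S_2, \Phi^2}$, for fixed subordinate partitions $\Phi^i$. Reparametrize $H$ to be stationary on the collars $T \times [0, 1/4]$ and $T \times [3/4, 1]$, identify $S^n$ with $\partial \Delta^{n+1}$, and let $\tilde V_k = \{y \in \partial \Delta^{n+1} : y_k > 0\}$ be the open star of $v_k$. Define
\[
Q_k \;:=\; \bigl(S_k^1 \times [0, 1/4)\bigr) \;\cup\; H^{-1}(\tilde V_k) \;\cup\; \bigl(S_k^2 \times (3/4, 1]\bigr).
\]
Each $Q_k$ is open as a union of three open pieces, and the boundary extension property required by Definition \ref{def31} holds because at $t = 0$ the middle piece contributes only $\{x : \varphi_k^1(x) > 0\} \subset S_k^1$, so the restriction collapses to $S_k^1$, and symmetrically at $t = 1$.

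\textbf{Main obstacle.} The nontrivial verification is $\bigcap_k Q_k = \emptyset$. On the interior slab $t \in (1/4, 3/4)$ only the middle piece contributes, reducing emptiness to the elementary fact $\bigcap_k \tilde V_k = \emptyset$ in $\partial \Delta^{n+1}$. On each collar the stationarity of $H$ makes membership in the middle piece at a point $(x,t)$ equivalent to $\varphi_k^i(x) > 0$, which implies $x \in S_k^i$, so membership in every $Q_k$ forces $x \in \bigcap_k S_k^i$, empty by hypothesis. The step I expect to require the most care is the closed-cover analogue: the direct replacement of open stars by closed stars over-counts at the boundary (a point $y$ on the boundary of $\tilde V_k$ pulls back to something outside $S_k^i$), so one must instead use closed complements of the open stars together with a shrinking of $\mathcal S_i$, or appeal to \cite[Lemma 2.4]{MusH} to reduce to the open case; once this is arranged the remaining bookkeeping parallels the open version.
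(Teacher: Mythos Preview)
Your argument is correct. The forward direction is the same as the paper's (which simply calls it ``clear''), and both proofs reduce the closed-cover case to the open one via a lemma from \cite{MusH}. For the converse you take a genuinely different route. The paper fixes the boundary-restriction problem---that a single pullback $F_\Phi^{-1}(B_\ell)$ restricts at $t=0$ only to the support $\{\varphi_\ell^1>0\}\subsetneq S_\ell^1$---by setting $Q_\ell:=\bigcup_{\Phi\in\Pi} F_\Phi^{-1}(B_\ell)$, a union over \emph{all} pairs $\Phi=(\Phi_1,\Phi_2)$ of subordinate partitions, so that the boundary trace recovers $S_\ell^1$ exactly. You instead work with a single homotopy, reparametrized to be stationary on collars, and adjoin $S_k^i\times(\text{collar})$ by hand. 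Your version is more elementary (no quantification over the class $\Pi$) and makes the verification of $\bigcap_k Q_k=\emptyset$ completely explicit, splitting cleanly into the interior slab and the two collars; the paper's construction leaves this check implicit.
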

\begin{proof} From \cite[Lemma 1.11]{MusH}  it suffices to prove the theorem for open covers.  It is clear that if   $[\mathcal S_1]=[\mathcal S_2]$ then $[f_{\mathcal S_1}]=[f_{\mathcal S_2}]$.  Now we prove the converse statement.

Suppose $[f_{\mathcal S_1}]=[f_{\mathcal S_2}]$.  Let $\Phi_i$, $i=1,2$, be any partitions of unity subordinate to $\mathcal S_i$. Then there is a homotopy
$F_{\Phi}:T\times[0,1]\to S^n$ between $f_{\mathcal S_1,\Phi_1}$ and $f_{\mathcal S_2,\Phi_2}$, where  $\Phi:=(\Phi_1,\Phi_2)$.

Consider $S^n$ as the boundary of $\Delta^{n+1}$. Let  $B_i$ be the open star of a vertex $v_i$ of $\Delta^{n+1}$. Let
$$
U_\ell(\Phi):=F_{\Phi}^{-1}(B_{\ell}), \quad U(\Phi):=\{U_0(\Phi),\ldots, U_{n+1}(\Phi)\}.
$$
Then $U(\Phi)$ is a cover of $T\times[0,1]$.

Denote by $\Pi$ the set of all pairs $\Phi:=(\Phi_1,\Phi_2)$, where $\Phi_i$ is a partition of unity subordinate to $\mathcal S_i$.  Let
$$
Q_\ell:=\bigcup\limits_{\Phi\in\Pi}{U_\ell(\Phi)}, \quad {\mathcal Q}:=\{Q_0,\ldots,Q_{n+1}\}.
$$
Then $ {\mathcal Q}$ is a cover of $T\times[0,1]$ and
$$
 {\mathcal Q}|_{T\times\{0\}}=\mathcal S_1, \quad   {\mathcal Q}|_{T\times\{1\}}=\mathcal S_2.
$$
This yields $[\mathcal S_1]=[\mathcal S_2]$.
\end{proof}

\begin{df} Let $M_i$, $i=1,2$, be compact oriented manifolds  without boundary with $\dim{M_1}=\dim{M_2}$.
Let $\mathcal S_i=\{S_0^i,\ldots,S^i_{n+1}\}$, $i=1,2$, be  covers of $M_i$ such that for $i=1,2$ the intersection of all subsets in $\mathcal S_i$ is empty. We say that $\mathcal S_1$ is cobordant to $\mathcal S_2$ and write $[\mathcal S_1]_C=[\mathcal S_2]_C$ if there are a compact oriented manifold $W$ with  $\partial W=M_1 \sqcup M_2$ and its cover $\mathcal Q=\{Q_1,\ldots,Q_n\}$ such that  $\mathcal Q|_{M_i}=\mathcal S_i$, $i=1,2$, and the intersection of all $Q_k$ is empty. If $M_2=\emptyset$, then we say that  $\mathcal S_1$ is null--cobordant and write $[\mathcal S_1]_C=0$.
\end{df}

Note that if $[\mathcal S_1]_C=[\mathcal S_2]_C$, then $[f_{\mathcal S_1}]_C=[f_{\mathcal S_2}]_C$ in $\Omega_*^{SO}(S^n)$, where  for a continuous  $f:M\to S^n$  by $[f]_C$  we denote the correspondent cobordism class.

\begin{theorem} \label{thm33} Let $M_1$ and $M_2$ be compact oriented homotopy equivalent manifolds  without boundary.  Let $\mathcal S_i$, $i=1,2$, be  covers of $M_i$ such that  the intersection of all covers in $\mathcal S_i$ is empty.  Then $[\mathcal S_1]_C=[\mathcal S_2]_C$ if and only if $[f_{\mathcal S_1}]_C=[f_{\mathcal S_2}]_C$.
\end{theorem}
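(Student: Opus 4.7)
The plan is to mirror the proof of Theorem~\ref{th32}, replacing the cylinder $T\times[0,1]$ by the cobordism $W$. The forward implication is already noted in the paragraph preceding the theorem: a partition of unity subordinate to a witnessing cover $\mathcal Q$ on $W$ produces a map $f_{\mathcal Q,\Psi}\colon W\to S^n$ whose restrictions to $M_1$ and $M_2$ give the required bordism between $f_{\mathcal S_1}$ and $f_{\mathcal S_2}$.

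For the converse, suppose $[f_{\mathcal S_1}]_C=[f_{\mathcal S_2}]_C$ in $\Omega_m^{SO}(S^n)$. By definition there exist a compact oriented $W$ with $\partial W=M_1\sqcup M_2$, partitions of unity $\Phi_i^0$ subordinate to $\mathcal S_i$, and a continuous $F_0\colon W\to S^n$ with $F_0|_{M_i}=f_{\mathcal S_i,\Phi_i^0}$. Identify $S^n=\partial\Delta^{n+1}$ and let $B_\ell$ be the open star of the vertex $v_\ell$, so that $\{B_0,\ldots,B_{n+1}\}$ is an open cover of $S^n$ with empty total intersection. For every pair $\Phi=(\Phi_1,\Phi_2)$ of partitions of unity subordinate to $\mathcal S_1$ and $\mathcal S_2$, the maps $f_{\mathcal S_i,\Phi_i^0}$ and $f_{\mathcal S_i,\Phi_i}$ are homotopic by Lemma 2.1 of \cite{MusH}; splicing these boundary homotopies into $F_0$ through a collar of $\partial W$ yields a continuous $F_\Phi\colon W\to S^n$ with $F_\Phi|_{M_i}=f_{\mathcal S_i,\Phi_i}$.

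Exactly as in the proof of Theorem~\ref{th32}, let $\Pi$ be the set of all such pairs and set
\[
U_\ell(\Phi):=F_\Phi^{-1}(B_\ell),\qquad
Q_\ell:=\bigcup_{\Phi\in\Pi}U_\ell(\Phi),\qquad
\mathcal Q:=\{Q_0,\ldots,Q_{n+1}\}.
\]
Then $\mathcal Q$ is an open cover of $W$ with empty total intersection, both properties being inherited from $\{B_\ell\}$. The containment $Q_\ell\cap M_i\subseteq S_\ell^i$ is immediate from $\mathrm{supp}(\varphi_\ell^i)\subseteq S_\ell^i$ for every subordinate partition of unity $\Phi_i=\{\varphi_0^i,\ldots,\varphi_{n+1}^i\}$.

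The principal verification, and the only step that does not copy verbatim from Theorem~\ref{th32}, is the reverse inclusion $S_\ell^i\subseteq Q_\ell$: for each $x\in S_\ell^i$ one must exhibit a subordinate partition $\Phi_i$ with $\varphi_\ell^i(x)>0$. Fixing any subordinate partition $\Psi_i=\{\psi_0^i,\ldots,\psi_{n+1}^i\}$ and any continuous bump $\eta\colon M_i\to[0,1]$ with $\eta(x)>0$ and $\mathrm{supp}(\eta)\subset S_\ell^i$, set $\tilde\psi_\ell:=\psi_\ell^i+\eta$, $\tilde\psi_j:=\psi_j^i$ for $j\neq\ell$, and normalize by $1+\eta$; the resulting $\Phi_i$ is subordinate to $\mathcal S_i$ and satisfies $\varphi_\ell^i(x)>0$. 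This gives $\mathcal Q|_{M_i}=\mathcal S_i$, so $\mathcal Q$ certifies $[\mathcal S_1]_C=[\mathcal S_2]_C$.
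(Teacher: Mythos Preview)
Your proposal is correct and follows precisely the route the paper indicates: the paper's entire proof is the remark that one repeats the argument of Theorem~\ref{th32} with the cylinder $T\times[0,1]$ replaced by a cobordism $W$, and that is exactly what you do. You supply more detail than the paper --- the collar-splicing that produces $F_\Phi$ for each pair $\Phi$, and the bump-function verification of $\mathcal Q|_{M_i}=\mathcal S_i$ --- but the underlying strategy is identical.
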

\begin{proof}
By definition if $[f_{\mathcal S_1,\Phi_1}]_C= [f_{\mathcal S_2,\Phi_2}]_C$, then there is a  map
$F_{\Phi}:W\to S^n$ such that $F|_{M_i}=f_{\mathcal S_i,\Phi_i}$.  Actually, the theorem can be proved by the same arguments as Theorem \ref{th32} if we substitute $T\times[0,1]$ by a cobordism $W.$
\end{proof}

From this theorem it is easy prove the following corollary.

\begin{cor} \label{cor35} Let $\mathcal S$ be a cover of a compact oriented manifold $M$ such that the intersection of all subsets in $\mathcal S$ is empty. Suppose $[\mathcal S]_C=0$. Then there is a compact oriented manifold $W$ with  $\partial W=M$  such that $\mathcal S$  can be extended to a cover $\mathcal Q$ of $W$  (i.e. $\mathcal Q|_{M}=\mathcal S$) with the empty intersection of all subsets $Q_k$.
\end{cor}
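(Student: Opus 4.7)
The plan is to deduce the corollary directly from Theorem~\ref{thm33} together with the construction used in the proof of Theorem~\ref{th32}. Unpacking the hypothesis, $[\mathcal S]_C=0$ means that the associated class $[f_{\mathcal S}]_C$ vanishes in $\Omega^{SO}_*(S^n)$, so there exists a compact oriented manifold $W$ with $\partial W=M$ and a continuous map $F:W\to S^n$ whose restriction to $M$ agrees with $f_{\mathcal S,\Phi}$ for some partition of unity $\Phi$ subordinate to $\mathcal S$. The task is to convert this null-cobordism of maps into an extension of covers.

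The natural candidate for $\mathcal Q$ comes from the pullback of the standard cover of $S^n$ used to define $f_{\mathcal S}$ in the first place. Viewing $S^n$ as $\partial \Delta^{n+1}$, let $B_i$ be the open star of the vertex $v_i$, and set $U_i(\Phi):=F^{-1}(B_i)$. Since $\bigcap_i B_i=\emptyset$, the family $\{U_0(\Phi),\ldots,U_{n+1}(\Phi)\}$ is a cover of $W$ with empty total intersection, and on $M$ it restricts to a cover compatible with $\mathcal S$. To make the restriction equal to $\mathcal S$ on the nose, I would imitate the proof of Theorem~\ref{th32}: let $\Pi$ denote the set of all partitions of unity on $W$ that restrict to a partition subordinate to $\mathcal S$ on $M$, and define
$$Q_i:=\bigcup_{\Phi\in\Pi}U_i(\Phi),\qquad \mathcal Q:=\{Q_0,\ldots,Q_{n+1}\}.$$
Then $\mathcal Q$ is an open cover of $W$ with $\bigcap_i Q_i=\emptyset$ (the empty intersection is preserved because each $\Phi$ individually yields a cover with empty intersection, and the $Q_i$ agree setwise with preimages of the $B_i$ under a family of compatible maps), and $\mathcal Q|_M=\mathcal S$ by the standard subordination argument.

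The step I expect to be slightly subtle is verifying the two properties of $\mathcal Q$ simultaneously: that enlarging $U_i(\Phi)$ to the union $Q_i$ does not introduce a point in $\bigcap_i Q_i$, and that it still exactly recovers $S_i$ on the boundary. Both are handled by exactly the same mechanism as in Theorem~\ref{th32}, where $T\times[0,1]$ is now replaced by the cobordism $W$; the author indeed indicates that the proof is a direct translation. Finally, the closed-cover case reduces to the open case as in \cite[Lemma 1.11]{MusH}.
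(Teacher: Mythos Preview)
Your proposal is correct and takes exactly the route the paper indicates: the paper gives no explicit argument for this corollary, saying only that it follows from Theorem~\ref{thm33}, whose proof in turn is ``the same arguments as Theorem~\ref{th32} if we substitute $T\times[0,1]$ by a cobordism $W$''---precisely the translation you carry out. One small slip in your adaptation: $\Pi$ should index partitions of unity on $M$ subordinate to $\mathcal S$ (each coming with a chosen extension $F_\Phi\colon W\to S^n$), not partitions of unity on $W$; with that correction your construction is the direct analogue of the paper's $\Pi$ in Theorem~\ref{th32}, and the verification of $\mathcal Q|_M=\mathcal S$ and $\bigcap_k Q_k=\emptyset$ is deferred to that proof in the same way the paper does.
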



Theorem \ref{thm33} and Corollary \ref{thm28} yield

\begin{cor}\label{cor36} Let $m>n$. Then for any cover  $\mathcal U=\{U_0,\ldots,U_{n+1}\}$ of $S^m$ with the empty intersection of all subsets in $\mathcal U$  there are a compact oriented manifold $W$ with  $\partial W=S^m$  and  a cover $\mathcal Q$ of $W$   such that $\mathcal Q$ is an extension of  $\mathcal U$ with the empty intersection of all subsets in $\mathcal Q$.
\end{cor}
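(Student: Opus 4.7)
The plan is to combine Corollary \ref{thm28} with the map-to-cover machinery behind Theorems \ref{th32}--\ref{thm33}, upgrading a null-cobordism of the associated map to a null-cobordism of the cover itself. We may assume $\mathcal U=\{U_0,\ldots,U_{n+1}\}$ is open, the closed case reducing to this via \cite[Lemma 1.11]{MusH} as in the proof of Theorem \ref{th32}. Pick a partition of unity $\Phi=\{\varphi_0,\ldots,\varphi_{n+1}\}$ subordinate to $\mathcal U$ and form the associated map $f:=f_{\mathcal U,\Phi}\,:\,S^m\to S^n$. Since $m>n$, Corollary \ref{thm28} gives $\pi_m^C(S^n)=0$, so there exist a compact oriented manifold $W$ with $\partial W=S^m$ and a continuous extension $F\,:\,W\to S^n$ of $f$.

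It remains to promote $F$ to a cover of $W$ extending $\mathcal U$ itself. Realise $S^n=\partial\Delta^{n+1}$, let $B_\ell$ denote the open star of the vertex $v_\ell$, pick a collar neighborhood $C\cong S^m\times[0,\varepsilon)$ of $\partial W$, and, after a homotopy rel $\partial W$, arrange that $F|_C(x,t)=f(x)$ for $t\in[0,\varepsilon/3)$. Define
\[
Q_\ell\ :=\ F^{-1}(B_\ell)\ \cup\ \bigl(U_\ell\times[0,\varepsilon/2)\bigr),\qquad \mathcal Q:=\{Q_0,\ldots,Q_{n+1}\}.
\]
Each $Q_\ell$ is open in $W$, and $Q_\ell\cap\partial W=\{\varphi_\ell>0\}\cup U_\ell=U_\ell$, so $\mathcal Q|_{\partial W}=\mathcal U$. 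For the empty total intersection, a hypothetical point $y\in\bigcap_\ell Q_\ell$ lying outside the collar would satisfy $F(y)\in\bigcap_\ell B_\ell=\emptyset$; inside the collar, writing $y=(x,t)$, for every $\ell$ either $F(y)=f(x)\in B_\ell$ (forcing $\varphi_\ell(x)>0$ and hence $x\in U_\ell$) or directly $x\in U_\ell$, so $x\in\bigcap_\ell U_\ell=\emptyset$.

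The main obstacle I anticipate is precisely this boundary-matching in the second step: the naive preimage cover $F^{-1}(B_\ell)$ restricts on $\partial W$ only to $\{\varphi_\ell>0\}$, which is typically a proper subset of $U_\ell$, so one cannot simply pull back the open-star cover and stop. Enlarging by $U_\ell$ across a collar is the null-cobordism analogue of the ``union over partitions of unity'' device used in the proof of Theorem \ref{th32}, and arranging $F$ to be collar-constant is what makes the combinatorial check of empty total intersection go through cleanly. Equivalently, the construction above establishes $[\mathcal U]_C=0$ and the conclusion then follows from Corollary \ref{cor35}, matching the paper's one-line remark ``Theorem \ref{thm33} and Corollary \ref{thm28} yield''.
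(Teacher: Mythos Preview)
Your approach is exactly what the paper's one-line invocation of Theorem~\ref{thm33} and Corollary~\ref{thm28} unpacks to: null-cobord the associated map via Corollary~\ref{thm28}, then pull back the open-star cover and thicken at the boundary (the paper's Theorem~\ref{th32} achieves the boundary matching by taking a union over \emph{all} partitions of unity rather than a collar enlargement, but the two devices serve the same purpose).

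There is one genuine slip in the empty-intersection check. You make $F$ collar-constant only on $[0,\varepsilon/3)$ but enlarge by $U_\ell\times[0,\varepsilon/2)$. For a point $y=(x,t)$ with $\varepsilon/3\le t<\varepsilon/2$ your dichotomy ``$F(y)=f(x)\in B_\ell$ or $x\in U_\ell$'' breaks down: $F(y)$ need not equal $f(x)$ there, so $F(y)\in B_\ell$ no longer forces $x\in U_\ell$, and nothing prevents a mixed situation (some $\ell$ satisfied via $F(y)\in B_\ell$, others via $x\in U_\ell$) from occurring. The repair is trivial --- take the two intervals to coincide, e.g.\ enlarge only by $U_\ell\times[0,\varepsilon/3)$, or arrange $F$ collar-constant on all of $[0,\varepsilon/2]$ --- after which your case analysis is exhaustive and the proof is complete.
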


\noindent{\bf Acknowledgement.} We wish to thank Diarmuid Crowley, Alexander Dranishnikov, Roman Karasev,  Arkadiy Skopenkov and Alexey Volovikov for helpful discussions and comments.

 \medskip


\noindent Oleg Musin\\
 School of Mathematical and Statistical Sciences\\  University of Texas Rio Grande Valley\\ One West University Boulevard, Brownsville, TX, 78520, USA \\
{\it E-mail address:} oleg.musin@utrgv.edu

\medskip

\noindent Jie Wu\\
Department of Mathematics\\
National University of Singapore\\
S17-06-02, 10 Lower Kent Ridge Road\\
Singapore 119076\\
{\it Email address:} matwuj@nus.edu.sg\\

\end{document}